\documentclass[a4paper,12pt]{amsart}

\usepackage{unicode-math}

\usepackage{graphics,graphicx}
\usepackage{tikz-cd}
\usetikzlibrary{calc}

\usepackage{xcolor}

\usepackage{enumitem}

\usepackage{booktabs}

\usepackage[a4paper]{geometry}
\usepackage[backref=page,colorlinks=true,allcolors=blue]{hyperref} 
\newtheorem{thm}{Theorem}[section]

\theoremstyle{definition}
\newtheorem{defn}[thm]{Definition}
\newtheorem{rem}[thm]{Remark}

\newtheorem{exa}[thm]{Example}

\DeclareMathOperator{\GL}{GL}

\DeclareMathOperator{\Fan}{Fan}

\newcommand{\bbZ}{\mathbb{Z}}
\newcommand{\bbP}{\mathbb{P}}
\newcommand{\bbR}{\mathbb{R}}
\newcommand{\bbN}{\mathbb{N}}

\newcommand{\bbF}{\mathbb{F}}

\begin{document}

\title{The Elusive Relatively K-Unstable Delzant Octagon: \\ A Numerical Search}
\author{Thibaut Delcroix}
\address{Univ Montpellier, CNRS, Montpellier, France}
\email{thibaut.delcroix@umontpellier.fr}
\urladdr{https://delcroix.perso.math.cnrs.fr/}
\author{Bijan Mohammadi}
\address{Univ Montpellier, CNRS, Montpellier, France}
\email{bijan.mohammadi@umontpellier.fr}
\urladdr{https://imag.umontpellier.fr/~mohamadi/}
\date{2026}

\keywords{Relative K-stability, toric surfaces, Delzant polytopes, Donaldson-Futaki invariant, Symbolic-numeric computation, experimental mathematics}

\begin{abstract}
Relative K-stability of toric varieties can be tested through explicit Donaldson-Futaki computations, yet finding unstable examples remains challenging. We develop a reproducible symbolic-numeric framework to investigate all fifteen families of Delzant octagons corresponding to smooth toric surfaces of Picard rank six. While numerical optimization identifies several apparently destabilizing configurations, exact rational reconstruction systematically removes the observed instability.
We explain why Wang and Zhou's 2014 tentative construction of a relatively K-unstable octagon does not allow to find an explicit example. 
\end{abstract}

\keywords{}
\subjclass{}

\maketitle

\section{Introduction}

This work originated from a series of discussions between a pure mathematician and a physicist with a background in numerical methods, combining geometric analysis, computational experimentation, and physical intuition and interpretation. 
Beyond the specific problem of relative K-stability, this work serves as a case study illustrating the interaction between numerical optimization, symbolic certification, and experimental discovery in algebraic geometry.

A central problem in Kähler geometry is to determine which complex manifolds
admit canonical Kähler metrics such as Kähler-Einstein metrics, constant scalar curvature (cscK) metrics, and more generally 
extremal metrics introduced by Calabi \cite{Calabi1982}.
For Kähler manifolds, the Yau-Tian-Donaldson (YTD) correspondence very recently established in \cite{Li_2022_cscK,Boucksom-Jonsson,Darvas-Zhang,Trusiani} relates the
existence of such canonical metrics to a stability notion called uniform relative \emph{K-stability} (or variants).
The original precise formulation of the Yau-Tian-Donaldson correspondence by Donaldson in \cite{Donaldson_2002} does not hold in general, as shown by Jihao Liu using LLMs \cite{Liu}. 

In the case of toric manifolds, Donaldson reformulated K-stability as a convex-geometric problem on the associated Delzant polytope
\cite{Donaldson_2002}. 
This approach translates the stability question into explicit integral inequalities of piecewise linear convex functions over the polytope, making toric surfaces suitable test cases for the YTD correspondence. 
In subsequent works culminating in \cite{Don09}, he established that K-stability (as opposed to uniform K-stability) is indeed sufficient for the existence of cscK metrics on toric surfaces, and furthermore that it is enough to test on piecewise linear function with two linearity domains.  
This is one of the very few classes of polarized varieties (with indefinite first Chern class) were relative K-stability may be approached numerically in a satisfying way: the space of conditions to check is finite dimensional. 
Following the first author's terminology, one may call this an example of an effective YTD correspondence. 

More generally, the case of relative K-stability for toric surfaces, corresponding to existence of extremal Kähler metrics, has been addressed by several authors, among which \cite{Wang-Zhou_2011, Wang-Zhou_2014,ChenLiSheng2014,Li_Lian_Sheng_2023}. 
Other examples of effective YTD correspondences include \cite{ACGT_2008, Delcroix_2023_RK1, Jubert_2023, Jubert-Yin}. 

However satisfying this result may be, it is in general very hard to check relative K-stability even for toric surfaces, in contrast with the case of Fano varieties where, at least for very symmetric varieties, checking K-stability reduces simple barycenter conditions \cite{Wang-Zhu_2004,Delcroix_2017_KE,Ilten-Suss_2017,Delcroix_2020,Li_Li_2026}. 
In his original article \cite{Donaldson_2002}, Donaldson exhibits a first example of relatively K-unstable toric surface. 
The articles \cite{Wang-Zhou_2011, Wang-Zhou_2014} provide further examples, while Luo and Chen
\cite{LuoChen2020} devote a full article to verifying relative K-stability for all Kähler classes on the blowup of \(\bbP^2\) at two points. 
It is in principle easy to provide a certificate that a toric surface is relatively K-unstable: it suffices to find a destabilizing piecewise linear function. 
However, it is not so easy to find toric surfaces which may be relatively K-unstable. 
In \cite{Wang-Zhou_2011} the authors exhibit, by an asymptotic argument, relative K-unstable toric surfaces with Picard rank 7, and claim that there probably does not exist any examples with Picard rank 5. 
In \cite{Wang-Zhou_2014} they claim to have found examples of relatively K-unstable toric surfaces with Picard rank 6. 

The present work investigates the exhaustive collection of fifteen families of Delzant
octagons, each depending on an integer parameter and six positive variables (five up to rescaling). 
By toric geometry, these correspond to all Kähler classes on smooth, Picard rank 6 toric surfaces.  
The objective is to develop and test a computational framework capable of identifying candidate destabilizing configurations and certifying them through exact verification. 
This amounts to obtaining a negative value for the Donaldson-Futaki invariant. 

To this end, a numerical optimization procedure is developed to explore the
parameter space and identify candidate destabilizing configurations. The
implementation allows computations to be carried out at user-prescribed
floating-point precision and permits a systematic exploration of the parameter.
For each candidate configuration, the associated affine function is
computed, a convex piecewise-linear test function is constructed, and
the Donaldson-Futaki invariant is evaluated.

Among the fifteen families considered, this procedure initially provides several configurations which exhibit negative values of Donaldson-Futaki invariant. 
However, it turns out to be a numerical artifact: the numerical instabilities do not persist after exact rational reconstruction and symbolic verification. 
In particular, we were not able to replicate Wang and Zhou's octagon examples. 

Conversely, as should be expected from a numerical investigation, our algorithm does not provide a proof that all octagons considered are relatively K-stable, only a strong indication that they should be. 
It should be noted that, in his PhD thesis \cite{Sektnan_2016_PhD}, Lars Sektnan developed an algorithm that, in some cases (namely in situations where there are no vector field obstructions to the existence of cscK metrics), is theoretically conclusive. 
In the general relative K-stability case, though, no such algorithm exists. 

The paper concludes with a discussion of the links between the toric framework, Green's formulas, and Poisson theory, together with a series of heuristic physical interpretations of the main results. These include viewing the extremal affine function as an effective interior density induced by boundary data, and interpreting the negativity of the Donaldson functional as the emergence of an energy-lowering instability mode.

The computational framework is publicly available. 

\section{Relative K-unstability of toric surfaces}

Let \(\Delta\subset \bbR^2\) be a convex polygon with rational normal fan, that is, such that one may write 
\[ \Delta = \bigcap_{0\leq i \leq \rho+2} \{\langle u_i, \cdot\rangle \geq c_i \} \]
where the \(u_i\) are primitive vectors in \(\bbZ^2\), \(c_i\in \bbR\), \(\rho\in \bbN\) such that \(\rho+2\) is exactly the number of facets of \(\Delta\). 
We denote by \(F_i\) the facet \(\Delta\cap \{\langle u_i, \cdot\rangle = c_i \}\). 

By the general theory of toric varieties \cite{Fulton_1993}, \(\Delta\) encodes a toric surface \(X\) equipped with a Kähler class \(\alpha\). 
If furthermore the numbers \(c_i\) are integers, then \(\alpha=c_1(L)\) for an ample line bundle \(L\) on \(X\). 
The toric surface is smooth if the polytope is Delzant, that is, for any two \emph{adjacent} facets \(F_i\) and \(F_j\), the corresponding normals \(u_i\), \(u_j\) form a basis of \(\bbZ^2\). 
In this case, the number \(\rho\) is the Picard rank of the toric surface. 
We will classify the Delzant octagons (corresponding to Kähler classes on smooth toric surfaces with Picard rank 6) in the next section. 

To consider K-stability of \((X,L)\), we must introduce a few notions. 
Let \(\partial \Delta\) denote the boundary of \(\Delta\), which is the union of its facets. 
We consider on \(\partial \Delta\) the measure \(d\sigma\), defined as follows. 
The affine span of a facet \(F_i\) is an affine line with a rational direction \(vec{F_i}\), so \(\bbZ^2\cap \vec{F_i}=\bbZ v_i\) for some \(v_i\in \bbZ^2\). 
On \(\vec{F_i}\) there is a natural normalized Lebesgue measure which gives mass \(1\) to the segment \([0,v_i]\), and \(d\sigma|_{F_i}\) is the pullback of this measure by translation. 

Consider the linear functional 
\( \mathcal{L}: C^0(\Delta,\bbR) \to \bbR \) defined by 
\[ \mathcal{L}(f) = \int_{\partial \Delta} f d\sigma - \int_{\Delta} (a_1x_1+a_2x_2+a_0)f(x_1,x_2) dx_1dx_2 \]
where \(a_0\), \(a_1\) and \(a_2\) are the unique real numbers such that 
\[ \mathcal{L}(p_1) = \mathcal{L}(p_2) = \mathcal{L}(\mathbb{1}) = 0 \]
where \(p_i:(x_1,x_2)\mapsto x_i\) are the projections and \(\mathbb{1}\) is the constant function equal to \(1\). 

\begin{defn}
The polygon \(\Delta\) is \emph{relatively K-unstable} if there exists a convex function \(f:\Delta\to \bbR\) such that \( \mathcal{L}(f)<0\). 
\end{defn}

\begin{rem}
It is not hard to check (but this is somewhat hidden in the definition of the numbers \(a_i\)), that for any positive number \(t\), \(t\Delta\) is relatively K-unstable if and only if \(\Delta\) is. 
\end{rem}

We are interested in detecting relatively K-unstable Delzant octagons. 
The space of all convex functions on \(\Delta\) is infinite dimensional, we will focus on a smaller class of convex functions. 

\begin{defn}
A \emph{simple piecewise linear convex function} on \(\Delta\) is a function of the form 
\(f(x_1,x_2) = \sup(0,b_1x_1+b_2x_2+b_0) \)
for some real numbers \(b_1\), \(b_2\) and \(b_0\) such that the affine line \(\{(x_1,x_2)\mid b_1x_1+b_2x_2+b_0=0\}\) intersects the interior of \(\Delta\). 
The affine line \(\{(x_1,x_2)\mid b_1x_1+b_2x_2+b_0=0\}\)  is called the \emph{crease} of \(f\). 
\end{defn}

Since we are interested in numerical experiments, we are entitled to focus on simple piecewise linear functions which are simple examples of convex functions, to detect relative K-unstability. 
Nonetheless, a remarkable result of Donaldson \cite{Donaldson_2002}, Wang and Zhou \cite{Wang-Zhou_2011,Wang-Zhou_2014} states that it is enough to consider simple piecewise linear functions to detect relative K-unstability, and also K-stability.

\section{Delzant octagons}

Recall \cite{Fulton_1993} that complete toric varieties of complex dimension \(n\) are encoded by rational polyhedral fans in \(\bbR^n\) with full support, up to the action of \(\GL_n(\bbZ)\). 
In dimension \(2\), a complete fan is fully described by the data of the (primitive) generators of all rays, we denote by \(\Fan(u_1,\ldots,u_r)\) the complete fan such that its rays are generated by \(u_1, \ldots, u_r\). 
For example, the fan of \(\bbP^2\) is given by the fan whose rays are generated by \((1,0)\), \((0,1)\) and \((-1,-1)\), and the fan of the Hirzebruch surface \(\bbF_k=\bbP_{\bbP^1}(\mathcal{O}\oplus\mathcal{O}(k))\) is given by the fan whose rays are generated by \((1,0)\), \((0,1)\), \((0,-1)\) and \((-1,k)\).
Using the notations \(e_1=(1,0)\), \(e_2=(0,1)\), and \(v_k=(-1,k) =-e_1+ke_2\) throughout, the fan of \(\bbP^2\) is thus \(\Fan(e_1,e_2,-e_1-e_2)\) and the fan of \(\bbF_k\) is \(\Fan(\pm e_2, e_1, v_k)\).

\begin{thm}
A smooth projective toric surface of Picard rank 6 corresponds to one of the fans \(\Sigma_i(k)\) for some \(k\in \bbZ\) and \(i\in \{1,\ldots,15\}\) below. 
A Kähler class on such a surface corresponds to one of the Delzant octagons \(\Delta_i(k,\ell_1,\ldots, \ell_6)\) given below, which satisfies one or two condition listed with the description of the polytope. 
Conversely, to each choice of \(\Delta_i(k,\ell_1,\ldots, \ell_6)\) with admissible parameters corresponds a Kähler class on the smooth projective toric surface of Picard rank \(6\) with fan \(\Sigma_i(k)\). 
\end{thm}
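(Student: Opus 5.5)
We would follow the classical route: classify smooth complete toric surfaces by iterated blowups, then read off polygons from fans. The starting point is the classical theorem (Fulton, and Oda's classification) that every smooth complete toric surface is obtained from \(\bbP^2\) or from a Hirzebruch surface \(\bbF_k\) by a finite sequence of torus-equivariant blowups of fixed points. Combinatorially, such a blowup inserts the ray generated by \(u_i+u_{i+1}\) between two adjacent rays \(u_i,u_{i+1}\). Since the Picard rank is the number of rays minus \(2\), and \(\rho=6\) means eight rays, every Picard rank \(6\) surface is reached from \(\bbF_k\), which has four rays, by four successive blowups. The case starting from \(\bbP^2\) is absorbed, because \(\bbP^2\) blown up once is \(\bbF_1\).

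The first step is to enumerate the possible fans. Fixing \(\Fan(\pm e_2,e_1,v_k)\), we list all sequences of four insertions of sums of adjacent rays. We then reduce modulo the dihedral symmetry of the cyclic sequence of rays and modulo \(\GL_2(\bbZ)\). This includes the identifications \(\bbF_k\simeq\bbF_{-k}\), and the fact that different starting Hirzebruch surfaces can lead to the same blowup, which changes the parameter \(k\).

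A convenient bookkeeping device is the sequence of self-intersection numbers \((a_1,\dots,a_8)\) of the boundary divisors. These are defined by \(u_{i-1}+u_{i+1}=-a_iu_i\), and the fan is determined up to \(\GL_2(\bbZ)\) by this cyclic sequence. Blowing up a fixed point between \(D_i\) and \(D_{i+1}\) inserts a \(-1\) and lowers \(a_i\) and \(a_{i+1}\) by one. Starting from \((0,k,0,-k)\), we organise the possibilities according to how the four blowups are distributed among the four corners. This includes iterated blowups at points on newly created curves. Quotienting by cyclic rotations and reflections should produce exactly fifteen shapes, each depending on the integer \(k\); these are the fans \(\Sigma_i(k)\).

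The main obstacle is making this enumeration complete and non-redundant. Duplicates may occur for special values of \(k\), and orbits can merge under reflection. We would first try to check irredundancy via the cyclic self-intersection sequence, treating small \(|k|\) separately, and possibly verify the count by a short computer enumeration.

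The second step treats the Kähler classes. For a fixed smooth complete fan with rays \(u_1,\dots,u_8\), the Kähler classes correspond to polygons \(\bigcap_i\{\langle u_i,\cdot\rangle\geq c_i\}\) whose normal fan is exactly that fan, considered up to translation. These form the ample cone, which is given by strict convexity of the support function: each edge must have positive length. Using translations, we normalise two of the \(c_i\) to \(0\), which leaves six parameters \(\ell_1,\dots,\ell_6\), naturally the lengths of six of the edges. The remaining two edge lengths are then affine functions of the \(\ell_j\) and \(k\), determined by the closing relation \(\sum_i \mathrm{len}(F_i)\,v_i=0\) for the primitive edge directions \(v_i\). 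Positivity of these two lengths gives the one or two admissibility conditions listed with each \(\Delta_i\); only one appears when one of the two expressions is automatically positive. The Delzant property holds automatically because adjacent normals form a basis of \(\bbZ^2\).

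The converse follows from the standard correspondence \cite{Fulton_1993}. Any admissible parameter choice gives a polygon with normal fan \(\Sigma_i(k)\), hence a Kähler class on the corresponding smooth projective surface.
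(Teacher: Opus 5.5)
Your proposal is correct and follows essentially the same route as the paper. You reduce via Fulton's theorem to four equivariant blowups of \(\bbF_k\) and enumerate the resulting fans up to \(\GL_2(\bbZ)\). You then parametrise Kähler classes by six edge lengths, with the two edges through the origin determined by the closing relation, and the Delzant property and the converse coming from smoothness of the fan.

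The only difference is bookkeeping. You test isomorphism with cyclic self-intersection sequences and sort blowups by the four corners of \(\bbF_k\). The paper, which gives no details either, keeps the opposite pair \(\pm e_2\) fixed and splits the remaining six rays between the two half-planes; this makes the count of fifteen quicker to confirm, since the splittings \(1+5\), \(2+4\), \(3+3\) give \(9+3+3\) families.
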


\begin{proof}

It is well known (see \cite[page 43, Proposition]{Fulton_1993}) that a smooth projective toric surface can be obtained by a succession of blowups at torus fixed points from the projective plane \(\bbP^2\) or a Hirzebruch surface \(\bbF_k\) for some \(k\in \bbZ_{\geq 0}\). 
Torus fixed points correspond to maximal cones in the fan, and in our two dimensional setting, blowing up the torus fixed point corresponding to the cone generated by primitive generators \(u_1\) and \(u_2\) amounts to adding the primitive ray generator \(u_1+u_2\). 
In particular, the blowup of \(\bbP^2\) is (up to symmetries) the toric surface whose fan is \(\Fan(e_1,e_2,-e_2=-e_1-e_2+e_1,-e_1-e_2)\), which coincides with the fan of \(\bbF_1\) (up to symmetries again). 
As a consequence, it suffices to consider blowups of Hirzebruch surfaces as soon as the Picard rank of the smooth projective toric surface is \(2\) or higher. 

At the level of fans, it implies in particular that there exists a pair of opposite rays (which is apart from sporadic exceptions, unique) which may be assumed to be generated by \(\pm e_2\). 
Considering the various possibilities of blowups, one quickly reaches a classification of all possible fans for a given Picard rank.  
Smooth Picard rank \(6\) toric surfaces correspond to one of fifteen families of fans \(\Sigma_i(k)\) parametrized by an integer \(k\) as follows. 

\begin{itemize}
\item \(\Sigma_1(k) = \Fan(\pm e_2,e_1,v_k,v_k+e_2,v_k+2e_2,v_k+3e_2,v_k+4e_2)\)
\item \(\Sigma_2(k) = \Fan(\pm e_2,e_1,v_k,v_k+e_2,v_k+2e_2,v_k+3e_2,2v_k+e_2)\)
\item 
\(\Sigma_3(k) = \Fan(\pm e_2,e_1,v_k,v_k+e_2,v_k+2e_2,v_k+3e_2,2v_k+3e_2)\)
\item 
\(\Sigma_4(k) = \Fan(\pm e_2,e_1,v_k,v_k+e_2,v_k+2e_2,2v_k+e_2,3v_k+2e_2)\)
\item 
\(\Sigma_5(k) = \Fan(\pm e_2,e_1,v_k,v_k+e_2,v_k+2e_2,2v_k+e_2,3v_k+e_2)\)
\item 
\(\Sigma_6(k) = \Fan(\pm e_2,e_1,v_k,v_k+e_2,v_k+2e_2,2v_k+e_2,2v_k+3e_2)\)
\item 
\(\Sigma_7(k) = \Fan(\pm e_2,e_1,v_k,v_k+e_2,2v_k+e_2,3v_k+e_2,4v_k+e_2)\)
\item 
\(\Sigma_8(k) = \Fan(\pm e_2,e_1,v_k,v_k+e_2,2v_k+e_2,3v_k+e_2,5v_k+2e_2)\)
\item 
\(\Sigma_9(k) = \Fan(\pm e_2,e_1,v_k,v_k+e_2,2v_k+e_2,3v_k+e_2,3v_k+2e_2)\)
\item 
\(\Sigma_{10}(k) = \Fan(\pm e_2,e_1,e_1-e_2,v_k,v_k+e_2,v_k+2e_2,v_k+3e_2)\)
\item 
\(\Sigma_{11}(k) = \Fan(\pm e_2,e_1,e_1-e_2,v_k,v_k+e_2,v_k+2e_2,2v_k+e_2)\)
\item 
\(\Sigma_{12}(k) = \Fan(\pm e_2,e_1,e_1-e_2,v_k,v_k+e_2,2v_k+e_2,3v_k+e_2)\)
\item 
\(\Sigma_{13}(k) = \Fan(\pm e_2,e_1,e_1-e_2,e_1-2e_2,v_k,v_k+e_2,v_k+2e_2)\)
\item 
\(\Sigma_{14}(k) = \Fan(\pm e_2,e_1,e_1-e_2,e_1-2e_2,v_k,v_k+e_2,2v_k+e_2)\)
\item 
\(\Sigma_{15}(k) = \Fan(\pm e_2,e_1,e_1-e_2,2e_1-e_2,v_k,v_k+e_2,2v_k+e_2)\)
\end{itemize}

For the toric surface whose fan is \(\Sigma_i(k)\), Kähler classes correspond to (convex) octagons whose normal inner fan is \(\Sigma_i(k)\), up to translation in the plane (sometimes one takes the opposite covnention: the normal outer fan). Those can be parametrized by the length of six edges (with some conditions on the admissible lengths) and the lengths of the two remaining edges are then prescribed. 
To deal consistently with the various cases, we will assume that the origin \((0,0)\) is the common vertex of the two edges with prescribed lengths, and we denote by \(\ell_1,\ldots,\ell_6\) the lattice length of the six other edges, ordered counterclockwise. 
We denote the corresponding octagon by 
\(\Delta_i(k,\ell_1,\ldots,\ell_6)\).

The octagon \(\Delta_1(k,\ell_1,\ldots,\ell_6)\) corresponds to a Kähler class if \(\ell_6>(k+4)\ell_1+(k+3)\ell_2+(k+2)\ell_3+(k+1)\ell_4+k\ell_5\), and then it has the following eight vertices
\begin{itemize}
\item \((0,0)\), 
\item \((\ell_6-(k+4)\ell_1-(k+3)\ell_2-(k+2)\ell_3-(k+1)\ell_4-k\ell_5,0)\), 
\item \((\ell_6-(k+3)\ell_2-(k+2)\ell_3-(k+1)\ell_4-k\ell_5,\ell_1)\), 
\item \((\ell_6-(k+2)\ell_3-(k+1)\ell_4-k\ell_5,\ell_1+\ell_2)\), 
\item \((\ell_6-(k+1)\ell_4-k\ell_5,\ell_1+\ell_2+\ell_3)\), 
\item \((\ell_6-k\ell_5,\ell_1+\ell_2+\ell_3+\ell_4)\), 
\item \((\ell_6,\ell_1+\ell_2+\ell_3+\ell_4+\ell_5)\), 
\item \((0,\ell_1+\ell_2+\ell_3+\ell_4+\ell_5)\). 
\end{itemize}

The octagon \(\Delta_2(k,\ell_1,\ldots,\ell_6)\) corresponds to a Kähler class if \(\ell_6>(k+3)\ell_1+(k+2)\ell_2+(k+1)\ell_3+(2k+1)\ell_4+k\ell_5\), and then it has the following eight vertices
\begin{itemize}
\item \((0,0)\), 
\item \((\ell_6-(k+3)\ell_1-(k+2)\ell_2-(k+1)\ell_3-(2k+1)\ell_4-k\ell_5,0)\), 
\item \((\ell_6-(k+2)\ell_2-(k+1)\ell_3-(2k+1)\ell_4-k\ell_5,\ell_1)\), 
\item \((\ell_6-(k+1)\ell_3-(2k+1)\ell_4-k\ell_5,\ell_1+\ell_2)\), 
\item \((\ell_6-(2k+1)\ell_4-k\ell_5,\ell_1+\ell_2+\ell_3)\), 
\item \((\ell_6-k\ell_5,\ell_1+\ell_2+\ell_3+2\ell_4)\), 
\item \((\ell_6,\ell_1+\ell_2+\ell_3+2\ell_4+\ell_5)\), 
\item \((0,\ell_1+\ell_2+\ell_3+2\ell_4+\ell_5)\). 
\end{itemize}

The octagon \(\Delta_3(k,\ell_1,\ldots,\ell_6)\) corresponds to a Kähler class if \(\ell_6>(k+3)\ell_1+(k+2)\ell_2+(2k+3)\ell_3+(k+1)\ell_4+k\ell_5\), and then it has the following eight vertices
\begin{itemize}
\item \((0,0)\), 
\item \((\ell_6-(k+3)\ell_1-(k+2)\ell_2-(2k+3)\ell_3-(k+1)\ell_4-k\ell_5,0)\), 
\item \((\ell_6-(k+2)\ell_2-(2k+3)\ell_3-(k+1)\ell_4-k\ell_5,\ell_1)\), 
\item \((\ell_6-(2k+3)\ell_3-(k+1)\ell_4-k\ell_5,\ell_1+\ell_2)\), 
\item \((\ell_6-(k+1)\ell_4-k\ell_5,\ell_1+\ell_2+2\ell_3)\), 
\item \((\ell_6-k\ell_5,\ell_1+\ell_2+2\ell_3+\ell_4)\), 
\item \((\ell_6,\ell_1+\ell_2+2\ell_3+\ell_4+\ell_5)\), 
\item \((0,\ell_1+\ell_2+2\ell_3+\ell_4+\ell_5)\). 
\end{itemize}

The octagon \(\Delta_4(k,\ell_1,\ldots,\ell_6)\) corresponds to a Kähler class if and only if \(\ell_6-k\ell_5-(2k+1)\ell_4-(3k+2)\ell_3-(k+1)\ell_2-(k+2)\ell_1>0\), and then it has the following eight vertices
\begin{itemize}
\item \((0,0)\), 
\item \((\ell_6-k\ell_5-(2k+1)\ell_4-(3k+2)\ell_3-(k+1)\ell_2-(k+2)\ell_1,0)\), 
\item \((\ell_6-k\ell_5-(2k+1)\ell_4-(3k+2)\ell_3-(k+1)\ell_2,\ell_1)\), 
\item \((\ell_6-k\ell_5-(2k+1)\ell_4-(3k+2)\ell_3,\ell_1+\ell_2)\), 
\item \((\ell_6-k\ell_5-(2k+1)\ell_4,\ell_1+\ell_2+3\ell_3)\), 
\item \((\ell_6-k\ell_5,\ell_1+\ell_2+3\ell_3+2\ell_4)\), 
\item \((\ell_6,\ell_1+\ell_2+3\ell_3+2\ell_4+\ell_5)\), 
\item \((0,\ell_1+\ell_2+3\ell_3+2\ell_4+\ell_5)\). 
\end{itemize}

The octagon \(\Delta_5(k,\ell_1,\ldots,\ell_6)\) corresponds to a Kähler class if and only if \(\ell_6-k\ell_5-(3k+1)\ell_4-(2k+1)\ell_3-(k+1)\ell_2-(k+2)\ell_1>0\), and then it has the following eight vertices
\begin{itemize}
\item \((0,0)\), 
\item \((\ell_6-k\ell_5-(3k+1)\ell_4-(2k+1)\ell_3-(k+1)\ell_2-(k+2)\ell_1,0)\), 
\item \((\ell_6-k\ell_5-(3k+1)\ell_4-(2k+1)\ell_3-(k+1)\ell_2,\ell_1)\), 
\item \((\ell_6-k\ell_5-(3k+1)\ell_4-(2k+1)\ell_3,\ell_1+\ell_2)\), 
\item \((\ell_6-k\ell_5-(3k+1)\ell_4,\ell_1+\ell_2+2\ell_3)\), 
\item \((\ell_6-k\ell_5,\ell_1+\ell_2+2\ell_3+3\ell_4)\), 
\item \((\ell_6,\ell_1+\ell_2+2\ell_3+3\ell_4+\ell_5)\), 
\item \((0,\ell_1+\ell_2+2\ell_3+3\ell_4+\ell_5)\). 
\end{itemize}

The octagon \(\Delta_6(k,\ell_1,\ldots,\ell_6)\) corresponds to a Kähler class if and only if \(\ell_6-k\ell_5-(2k+1)\ell_4-(k+1)\ell_3-(2k+3)\ell_2-(k+2)\ell_1>0\), and then it has the following eight vertices
\begin{itemize}
\item \((0,0)\), 
\item \((\ell_6-k\ell_5-(2k+1)\ell_4-(k+1)\ell_3-(2k+3)\ell_2-(k+2)\ell_1,0)\), 
\item \((\ell_6-k\ell_5-(2k+1)\ell_4-(k+1)\ell_3-(2k+3)\ell_2,\ell_1)\), 
\item \((\ell_6-k\ell_5-(2k+1)\ell_4-(k+1)\ell_3,\ell_1+2\ell_2)\), 
\item \((\ell_6-k\ell_5-(2k+1)\ell_4,\ell_1+2\ell_2+\ell_3)\), 
\item \((\ell_6-k\ell_5,\ell_1+2\ell_2+\ell_3+2\ell_4)\), 
\item \((\ell_6,\ell_1+2\ell_2+\ell_3+2\ell_4+\ell_5)\), 
\item \((0,\ell_1+2\ell_2+\ell_3+2\ell_4+\ell_5)\). 
\end{itemize}

The octagon \(\Delta_7(k,\ell_1,\ldots,\ell_6)\) corresponds to a Kähler class if and only if \(\ell_6-k\ell_5-(4k+1)\ell_4-(3k+1)\ell_3-(2k+1)\ell_2-(k+1)\ell_1>0\), and then it has the following eight vertices
\begin{itemize}
\item \((0,0)\), 
\item \((\ell_6-k\ell_5-(4k+1)\ell_4-(3k+1)\ell_3-(2k+1)\ell_2-(k+1)\ell_1,0)\), 
\item \((\ell_6-k\ell_5-(4k+1)\ell_4-(3k+1)\ell_3-(2k+1)\ell_2,\ell_1)\), 
\item \((\ell_6-k\ell_5-(4k+1)\ell_4-(3k+1)\ell_3,\ell_1+2\ell_2)\), 
\item \((\ell_6-k\ell_5-(4k+1)\ell_4,\ell_1+2\ell_2+3\ell_3)\), 
\item \((\ell_6-k\ell_5,\ell_1+2\ell_2+3\ell_3+4\ell_4)\), 
\item \((\ell_6,\ell_1+2\ell_2+3\ell_3+4\ell_4+\ell_5)\), 
\item \((0,\ell_1+2\ell_2+3\ell_3+4\ell_4+\ell_5)\). 
\end{itemize}

The octagon \(\Delta_8(k,\ell_1,\ldots,\ell_6)\) corresponds to a Kähler class if and only if \(\ell_6-k\ell_5-(3k+1)\ell_4-(5k+2)\ell_3-(2k+1)\ell_2-(k+1)\ell_1>0\), and then it has the following eight vertices
\begin{itemize}
\item \((0,0)\), 
\item \((\ell_6-k\ell_5-(3k+1)\ell_4-(5k+2)\ell_3-(2k+1)\ell_2-(k+1)\ell_1,0)\), 
\item \((\ell_6-k\ell_5-(3k+1)\ell_4-(5k+2)\ell_3-(2k+1)\ell_2,\ell_1)\), 
\item \((\ell_6-k\ell_5-(3k+1)\ell_4-(5k+2)\ell_3,\ell_1+2\ell_2)\), 
\item \((\ell_6-k\ell_5-(3k+1)\ell_4,\ell_1+2\ell_2+5\ell_3)\), 
\item \((\ell_6-k\ell_5,\ell_1+2\ell_2+5\ell_3+3\ell_4)\), 
\item \((\ell_6,\ell_1+2\ell_2+5\ell_3+3\ell_4+\ell_5)\), 
\item \((0,\ell_1+2\ell_2+5\ell_3+3\ell_4+\ell_5)\). 
\end{itemize}

The octagon \(\Delta_9(k,\ell_1,\ldots,\ell_6)\) corresponds to a Kähler class if and only if \(\ell_6-k\ell_5-(3k+1)\ell_4-(2k+1)\ell_3-(3k+2)\ell_2-(k+1)\ell_1>0\), and then it has the following eight vertices
\begin{itemize}
\item \((0,0)\), 
\item \((\ell_6-k\ell_5-(3k+1)\ell_4-(2k+1)\ell_3-(3k+2)\ell_2-(k+1)\ell_1,0)\), 
\item \((\ell_6-k\ell_5-(3k+1)\ell_4-(2k+1)\ell_3-(3k+2)\ell_2,\ell_1)\), 
\item \((\ell_6-k\ell_5-(3k+1)\ell_4-(2k+1)\ell_3,\ell_1+3\ell_2)\), 
\item \((\ell_6-k\ell_5-(3k+1)\ell_4,\ell_1+3\ell_2+2\ell_3)\), 
\item \((\ell_6-k\ell_5,\ell_1+3\ell_2+2\ell_3+3\ell_4)\), 
\item \((\ell_6,\ell_1+3\ell_2+2\ell_3+3\ell_4+\ell_5)\), 
\item \((0,\ell_1+3\ell_2+2\ell_3+3\ell_4+\ell_5)\). 
\end{itemize}

The octagon \(\Delta_{10}(k,\ell_1,\ldots,\ell_6)\) corresponds to a Kähler class if and only if \(\ell_6+\ell_5-k\ell_4-(k+1)\ell_3-(k+2)\ell_2-(k+3)\ell_1>0\) and \(\ell_1+\ell_2+\ell_3+\ell_4-\ell_6>0\), and then it has the following eight vertices
\begin{itemize}
\item \((0,0)\), 
\item \((\ell_6+\ell_5-k\ell_4-(k+1)\ell_3-(k+2)\ell_2-(k+3)\ell_1,0)\), 
\item \((\ell_6+\ell_5-k\ell_4-(k+1)\ell_3-(k+2)\ell_2,\ell_1)\), 
\item \((\ell_6+\ell_5-k\ell_4-(k+1)\ell_3,\ell_1+\ell_2)\), 
\item \((\ell_6+\ell_5-k\ell_4,\ell_1+\ell_2+\ell_3)\), 
\item \((\ell_6+\ell_5,\ell_1+\ell_2+\ell_3+\ell_4)\), 
\item \((\ell_6,\ell_1+\ell_2+\ell_3+\ell_4)\), 
\item \((0,\ell_1+\ell_2+\ell_3+\ell_4-\ell_6)\). 
\end{itemize}

The octagon \(\Delta_{11}(k,\ell_1,\ldots,\ell_6)\) corresponds to a Kähler class if and only if \(\ell_6+\ell_5-k\ell_4-(2k+1)\ell_3-(k+1)\ell_2-(k+2)\ell_1>0\) and \(\ell_1+\ell_2+2\ell_3+\ell_4-\ell_6>0\), and then it has the following eight vertices
\begin{itemize}
\item \((0,0)\), 
\item \((\ell_6+\ell_5-k\ell_4-(2k+1)\ell_3-(k+1)\ell_2-(k+2)\ell_1,0)\), 
\item \((\ell_6+\ell_5-k\ell_4-(2k+1)\ell_3-(k+1)\ell_2,\ell_1)\), 
\item \((\ell_6+\ell_5-k\ell_4-(2k+1)\ell_3,\ell_1+\ell_2)\), 
\item \((\ell_6+\ell_5-k\ell_4,\ell_1+\ell_2+2\ell_3)\), 
\item \((\ell_6+\ell_5,\ell_1+\ell_2+2\ell_3+\ell_4)\), 
\item \((\ell_6,\ell_1+\ell_2+2\ell_3+\ell_4)\), 
\item \((0,\ell_1+\ell_2+2\ell_3+\ell_4-\ell_6)\). 
\end{itemize}

The octagon \(\Delta_{12}(k,\ell_1,\ldots,\ell_6)\) corresponds to a Kähler class if and only if \(\ell_6+\ell_5-k\ell_4-(3k+1)\ell_3-(2k+1)\ell_2-(k+1)\ell_1>0\) and \(\ell_1+2\ell_2+3\ell_3+\ell_4-\ell_6>0\), and then it has the following eight vertices
\begin{itemize}
\item \((0,0)\), 
\item \((\ell_6+\ell_5-k\ell_4-(3k+1)\ell_3-(2k+1)\ell_2-(k+1)\ell_1,0)\), 
\item \((\ell_6+\ell_5-k\ell_4-(3k+1)\ell_3-(2k+1)\ell_2,\ell_1)\), 
\item \((\ell_6+\ell_5-k\ell_4-(3k+1)\ell_3,\ell_1+2\ell_2)\), 
\item \((\ell_6+\ell_5-k\ell_4,\ell_1+2\ell_2+3\ell_3)\), 
\item \((\ell_6+\ell_5,\ell_1+2\ell_2+3\ell_3+\ell_4)\), 
\item \((\ell_6,\ell_1+2\ell_2+3\ell_3+\ell_4)\), 
\item \((0,\ell_1+2\ell_2+3\ell_3+\ell_4-\ell_6)\). 
\end{itemize}

The octagon \(\Delta_{13}(k,\ell_1,\ldots,\ell_6)\) corresponds to a Kähler class if and only if \(\ell_6+2\ell_5+\ell_4-k\ell_3-(k+1)\ell_2-(k+2)\ell_1>0\) and \(\ell_1+\ell_2+\ell_3-\ell_5-\ell_6>0\), and then it has the following eight vertices
\begin{itemize}
\item \((0,0)\), 
\item \((\ell_6+2\ell_5+\ell_4-k\ell_3-(k+1)\ell_2-(k+2)\ell_1,0)\), 
\item \((\ell_6+2\ell_5+\ell_4-k\ell_3-(k+1)\ell_2,\ell_1)\), 
\item \((\ell_6+2\ell_5+\ell_4-k\ell_3,\ell_1+\ell_2)\), 
\item \((\ell_6+2\ell_5+\ell_4,\ell_1+\ell_2+\ell_3)\), 
\item \((\ell_6+2\ell_5,\ell_1+\ell_2+\ell_3)\), 
\item \((\ell_6,\ell_1+\ell_2+\ell_3-\ell_5)\), 
\item \((0,\ell_1+\ell_2+\ell_3-\ell_5-\ell_6)\). 
\end{itemize}

The octagon \(\Delta_{14}(k,\ell_1,\ldots,\ell_6)\) corresponds to a Kähler class if and only if \(\ell_6+2\ell_5+\ell_4-k\ell_3-(2k+1)\ell_2-(k+1)\ell_1>0\) and \(\ell_1+2\ell_2+\ell_3-\ell_5-\ell_6>0\), and then it has the following eight vertices
\begin{itemize}
\item \((0,0)\), 
\item \((\ell_6+2\ell_5+\ell_4-k\ell_3-(2k+1)\ell_2-(k+1)\ell_1,0)\), 
\item \((\ell_6+2\ell_5+\ell_4-k\ell_3-(2k+1)\ell_2,\ell_1)\), 
\item \((\ell_6+2\ell_5+\ell_4-k\ell_3,\ell_1+2\ell_2)\), 
\item \((\ell_6+2\ell_5+\ell_4,\ell_1+2\ell_2+\ell_3)\), 
\item \((\ell_6+2\ell_5,\ell_1+2\ell_2+\ell_3)\), 
\item \((\ell_6,\ell_1+2\ell_2+\ell_3-\ell_5)\), 
\item \((0,\ell_1+2\ell_2+\ell_3-\ell_5-\ell_6)\). 
\end{itemize}

The octagon \(\Delta_{15}(k,\ell_1,\ldots,\ell_6)\) corresponds to a Kähler class if \((k+1)\ell_1+(2k+1)\ell_2+k\ell_3-\ell_4-\ell_5-\ell_6<0\) and \(\ell_1+2\ell_2+\ell_3-\ell_5-2\ell_6>0\), and then it has the following eight vertices
\begin{itemize}
\item \((0,0)\), 
\item \((\ell_6-(k+1)\ell_1-(2k+1)\ell_2-k\ell_3+\ell_4+\ell_5,0)\), 
\item \((\ell_6-(2k+1)\ell_2-k\ell_3+\ell_4+\ell_5,\ell_1)\), 
\item \((\ell_6-k\ell_3+\ell_4+\ell_5,\ell_1+2\ell_2)\), 
\item \((\ell_6+\ell_4+\ell_5,\ell_1+2\ell_2+\ell_3)\), 
\item \((\ell_6+\ell_5,\ell_1+2\ell_2+\ell_3)\), 
\item \((\ell_6,\ell_1+2\ell_2+\ell_3-\ell_5)\), 
\item \((0,\ell_1+2\ell_2+\ell_3-\ell_5-2\ell_6)\). 
\end{itemize}
\end{proof}

\begin{rem}
In general, one should parametrize the families by \(k\in \bbZ\) but there are obvious symmetries in certain families (for example, it is enough to take \(k \geq -2\) for \(\Sigma_1\)), as well as some sporadic identifications (for example between \(\Sigma_1(-2)\) and \(\Sigma_{15}(-2)\)). 
\end{rem}

\begin{exa}
Here is an example of Delzant octagon from family 15. 
\begin{center}
\begin{tikzpicture}
\draw[dotted,very thin] (0,0) grid (3,4);
\draw (0,0) node[below right]{\(\Delta_{15}(0,1,1,1,1,1,1)\)} -- (1,0) -- (2,1) -- (3,3) -- (3,4) -- (2,4) -- (1,3) -- (0,1) -- cycle;
\end{tikzpicture}
\end{center}
\end{exa}

\section{The tentative examples of Wang and Zhou: a dead end?}

In \cite{Wang-Zhou_2014}, the authors consider the following family of Delzant octagons: its vertices are given by 
\((-1,-\alpha-2k), (-1,\alpha+4), (0,\alpha+4), (\epsilon_1,\alpha+4-\epsilon_1), (\epsilon_1+\epsilon_2,\alpha+4-\epsilon_1-2\epsilon_2), (\epsilon_1+\epsilon_2+\epsilon_3,\alpha+4-\epsilon_1-2\epsilon_2-3\epsilon_3), (1,\alpha+3\epsilon_1+2\epsilon_2+\epsilon_3), (1,-\alpha)\), 
for some positive parameters \(k\in \bbZ_{>0}\), \(\alpha\), \(\epsilon_1\), \(\epsilon_2\) and \(\epsilon_3\). 
Under the action of \(\begin{pmatrix}
0 & -1 \\ 1 & k
\end{pmatrix}\), it is easy to check that these polytopes are of the form \(\Delta_1(k,\ell_1,\ldots,\ell_6)\) where the notation \(k\) is consistent, 
\(\ell_1 = 1-\epsilon_1-\epsilon_2-\epsilon_3\),
\(\ell_2 = \epsilon_3\),
\(\ell_3 = \epsilon_2\),
\(\ell_4 = \epsilon_1\),
\(\ell_5 = 1\), and 
\(\ell_6 = 2\alpha+2k+4\).
Our numerical exploration did not uncover relatively K-unstable examples of this type as will be explained in the next sections. 
Let us explain this discrepancy by reviewing and correcting Wang and Zhou's argument. 

The claim in \cite{Wang-Zhou_2014} is that, writing \(\alpha=\beta k^2\), the octagon described above is relatively K-unstable for \(\beta\) fixed \(\geq 1/3\), \(k\) large enough, and \(\epsilon_1\), \(\epsilon_2\), \(\epsilon_3\) small enough. 
More precisely, the claim follows from the other claim that the limiting pentagon as all \(\epsilon_i\to 0\) is relatively K-unstable as well. 
This is the pentagon \(P\) whose vertices are 
\((-1,-\alpha-2k), (-1,\alpha+4), (0,\alpha+4), (1,\alpha), (1,-\alpha)\). 
More precisely again, the claim is that this pentagon is destabilized by the simple piecewise linear function 
\[u_t=\max(y-\alpha+kt,0) \]
for \(0<< t << k\). 
We will disprove this claim. 

\begin{thm}
The pentagon \(P\) is not destabilized by any function of the form \(u_t\) for \(k\) large enough and \(0< t << k\). 
\end{thm}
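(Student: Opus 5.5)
The plan is to compute \(\mathcal{L}(u_t)\) exactly as an explicit function of \(k\), \(\alpha=\beta k^2\) and \(t\), and then show that its leading term as \(k\to\infty\) is positive, uniformly for \(0<t\ll k\). The key simplification is that \(u_t\) depends only on \(y\). Write \(c=\alpha-kt\) for the height of the crease.

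\textbf{Step 1: reduction to one variable.} For \(0<t<2\beta k\) the crease \(y=c\) lies strictly between \(-\alpha\) and \(\alpha\). In this range it meets only the two vertical edges \(x=\pm1\), so the bottom edge (of slope \(k\)) never enters the support of \(u_t\). On the part of \(P\) above the crease, the slice at height \(y\) is an interval \([-1,r(y)]\), where
\[ r(y)=1 \text{ for } y\le\alpha, \qquad r(y)=(\alpha+4-y)/4 \text{ for } \alpha\le y\le\alpha+4. \]
Hence the width is \(w(y)=1+r(y)\) and the first moment is \(m(y)=(r(y)^2-1)/2\).

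The boundary measure \(d\sigma\) on this part of \(\partial P\) is computed edge by edge:
\begin{itemize}
\item on each vertical edge it is \(dy\);
\item on the top edge it is \(dx\);
\item on the edge of direction \((1,-4)\), which is primitive, it is \(dx=dy/4\).
\end{itemize}
So \(\mathcal{L}(u_t)\) becomes a sum of explicit one-dimensional integrals of \((y-c)\), weighted respectively by the boundary density, by \((a_2y+a_0)w(y)\), and by \(a_1 m(y)\).

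\textbf{Step 2: computing \(a_0,a_1,a_2\).} These require the moments of \(P\) up to order two, together with the boundary integrals of \(1\), \(x\) and \(y\). I will compute them by splitting \(P\) along \(x=0\) into a trapezoid and a quadrilateral. Substituting \(\alpha=\beta k^2\), I then solve the \(3\times3\) linear system \(\mathcal{L}(1)=\mathcal{L}(p_1)=\mathcal{L}(p_2)=0\) and expand \(a_0,a_1,a_2\) in powers of \(k\).

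To keep track of orders, note that \(P\) has height of order \(k^2\) and width \(2\), whereas the piece above the crease has height \(kt+4\). The latter is small compared with \(\alpha\) in the regime \(t\ll k\). A useful alternative is to use \(\mathcal{L}(y-c)=0\) to replace \(u_t\) by \((c-y)_+\), and to choose whichever side makes the leading asymptotics more transparent.

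\textbf{Step 3: the sign.} Set \(s=kt+4\), the height of the region above the crease. Then \(\mathcal{L}(u_t)\) is a polynomial in \(s\) whose coefficients are rational functions of \(k\). Because the weight is integrated against \((y-c)_+\), the cubic and quadratic terms in \(s\) can be compared directly. The goal is to show that the boundary term, which is of order \(s^2\) with a coefficient coming from the two vertical edges, dominates the interior term \((a_2y+a_0)\). The latter evaluated near \(y\approx\alpha\) should be of order \(1/k^{2}\) times \(s^2\), or carry a favorable sign. This gives \(\mathcal{L}(u_t)>0\) for \(k\) large and \(0<t\ll k\).

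\textbf{Main obstacle.} I expect Step 2 to be the hardest part: determining the exact leading asymptotics of \(a_2\alpha+a_0\), i.e.\ the extremal affine function at the top of the pentagon. Wang and Zhou's claim presumably hinges on its sign, and a cancellation there is where their estimate likely went wrong. I would carry out this computation symbolically, keeping at least two orders in \(1/k\). I would then check that the resulting expression for \(\mathcal{L}(u_t)\) is positive at rational sample values of \((\beta,k,t)\) with \(\beta\ge1/3\), as a consistency check before writing the asymptotic inequality.
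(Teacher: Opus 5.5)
Your Steps 1 and 2 are correct and coincide with the paper's set-up. They give $\int_{\partial P}u_t\,d\sigma=k^2t^2+6kt+14$, $\int_P u_t=k^2t^2+6kt+\frac{32}{3}$, $\int_P x\,u_t=-\frac43kt-\frac{10}{3}$ and $\int_P y\,u_t=\alpha k^2t^2-\frac13k^3t^3+6\alpha kt+\frac{32}{3}\alpha+\frac{32}{3}kt+\frac{80}{3}$.

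The gap is Step 3, which carries the whole content of the theorem, and the mechanism you propose there is false. Since $a_0\to1$ and $a_2\alpha=O(k^{-2})$, the interior weight $a_0+a_2y$ near $y\approx\alpha$ is close to $1$, not $O(k^{-2})$. So the boundary term does not dominate the interior term: both equal $k^2t^2+6kt+O(1)$ and cancel, and $\mathcal{L}(u_t)=O(1)$ for fixed $t$. Its sign is decided by second-order data:
\begin{itemize}
\item the $k^{-2}$ coefficient of $a_0=1+\frac{3\beta-1}{6\beta^2k^2}+O(k^{-3})$;
\item the leading term $a_1=-\frac{1}{\beta k}+O(k^{-2})$;
\item above all, the \emph{exact} leading coefficient $a_2=-\frac{3}{2\beta^2k^4}+O(k^{-5})$, because $a_2$ multiplies $\alpha k^2t^2=\beta k^4t^2$ and so contributes $+\frac{3t^2}{2\beta}$.
\end{itemize}
The bound $a_2=O(k^{-4})$ available in Wang--Zhou is not enough. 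Dropping this term leaves the $t^2$-coefficient $-\frac{3\beta-1}{6\beta^2}$, negative for $\beta>1/3$, which is exactly the spurious instability. Your ``main obstacle'' paragraph senses a cancellation, but locates it in the sign of $a_0+a_2\alpha$, which is simply close to $1$. The relevant quantity is the deviation $1-a_0-a_2\alpha=\frac{6\beta+1}{6\beta^2k^2}+O(k^{-3})$.

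Even granting that this net density on the strip $c\le y\le\alpha$ is positive, positivity of $\mathcal{L}(u_t)$ is not a domination statement. The strip gives $\frac{(6\beta+1)t^2}{6\beta^2}$ in the limit. The region $\alpha\le y\le\alpha+4$ gives the constant $14-\frac{32}{3}=\frac{10}{3}$ and, through $a_1$ and the $x$-asymmetry of that region, the negative term $-\frac{4t}{3\beta}$. All three are of the same order for bounded $t$. One gets
\[
\lim_{k\to\infty}\mathcal{L}(u_t)=\frac{10}{3}-\frac{4t}{3\beta}+\frac{(6\beta+1)t^2}{6\beta^2},
\]
and positivity requires checking the discriminant, which equals $-\frac{4(30\beta+1)}{9\beta^2}<0$. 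You also announce uniformity in $0<t\ll k$. For that you must check that the remaining errors, such as $\frac13a_2k^3t^3=O(t^3/k)$ and terms of size $O(t^2/k)$, are dominated by the $t^2$ term. None of this is carried out or correctly anticipated in the proposal, so as written it does not prove the statement.
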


\begin{proof}
The quantities involved in the Donaldson-Futaki invariant for this polygon (as for any polygon) can be computed with elementary methods: for an arbitrary function, 
\begin{align*}
\int_{\partial P} f d\sigma = & 
\int_0^{2\alpha+2k+4} f(-1,-\alpha-2k+t)dt \\
&+ \int_0^1 f(-1+t,\alpha+4) dt \\
&+ \int_0^1 f(t, \alpha+4-4t)dt \\
&+ \int_0^{2\alpha} f(1,\alpha-t) dt \\
&+ \int_0^2 f(1-t,-\alpha-kt) dt
\end{align*}
and 
\begin{align*}
\int_P f dx = & 
\int_{x=-1}^0\int_{y=-\alpha-k+kx}^{\alpha+4} f(x,y) dy dx \\
& + \int_{x=0}^1\int_{y=-\alpha-k+kx}^{\alpha+4-4x} f(x,y) dy dx 
\end{align*}

Using these expression, one readily determines the explicit linear system \(M(a_0,a_1,a_2)^T=(b_0,b_1,b_2)^T\), as in \cite{Wang-Zhou_2014}, such that the extremal affine function is \(A(x,y)=a_0+a_1x+a_2y\) in these coordinates. 
The authors of that paper derive asymptotic information on the coefficients \(a_0\), \(a_1\) and \(a_2\) and notably that 
\[ a_0 = 1+ \frac{3\beta -1}{6\beta^2k^2} + O(k^{-3})\]
\[ a_1 = \frac{-1}{\beta k} + O(k^{-2}) \]
and 
\[ a_2 = O(k^{-4})\]
(recall that \(\alpha=\beta k^2\)). 
This is correct as can be checked by using a variant of our SageMath program to compute \(A\) explicitly, but this is also not precise enough: one may more precisely write 
\[ a_2 = \frac{-3}{2\beta^2k^4} + O(k^{-5})\]
indeed, exact computation gives \(a_2=-\frac{3}{2}\frac{p}{q}\) where 
\[ p = 24\beta^3k^6 + 36\beta^2k^5 + 94\beta^2k^4 + 14\beta k^4 + 78\beta k^3 + 108\beta k^2 + k^3 + 10k^2 + 32k + 33 \]
and 
\begin{align*}
q= & 24\beta^5k^{10} + 60\beta^4k^9 + 174\beta^4k^8 + 64\beta^3k^8 + 364\beta^3k^7 + 532\beta^3k^6 + 36\beta^2k^7 \\  
&+ 300\beta^2k^6 + 858\beta^2k^5 + 10\beta k^6 + 840\beta^2k^4 + 110\beta k^5 + 464\beta k^4 + k^5 \\ &+ 892\beta k^3 + 14k^4 + 658\beta k^2 + 79k^3 + 226k^2 + 329k + 195
\end{align*}

Assuming \(0<t<<k\) as in \cite{Wang-Zhou_2014}, we have \(0<<\alpha-tk<\alpha\), which is the ordinate at which the simple piecewise linear function \(u_t\) starts to be non-zero.
As a consequence, the crease of the simple piecewise linear function \(u_t\) connects the two vertical sides of the pentagon. 
Again using formal computations, we have 
\begin{align*}
\mathcal{L}(u_t) = &k^2t^2 + 6 k t +14 \\
&-a_0 (k^2t^2+6kt + \frac{32}{3}) \\
&-a_1 (-\frac{4}{3}kt-\frac{10}{3}) \\
&-a_2 (\alpha k^2 t^2 - \frac{1}{3}k^3t^3 + 6 \alpha kt + \frac{32}{3}\alpha + \frac{32}{3}kt + \frac{80}{3})
\end{align*} 
It is transparent from the latter expression that the asymptotic behavior of \(a_2\) does play a role in the asymptotic behavior of \(\mathcal{L}(u_t)\). 
By using the asymptotic information on the \(a_i\), and taking the limit as \(k\to \infty\), we get 
\[ \lim_{k\to \infty}\mathcal{L}(u_t)= \frac{10}{3} - \frac{4t}{3\beta} + \frac{(6\beta+1)t^2}{6\beta^2} \]
Since \(\beta>0\), the latter function, quadratic in \(t\), is strictly positive by checking directly that its minimal value is \(\frac{2(30\beta +1)}{6\beta+1}\).
\end{proof}

\begin{rem}
Note that this is not a proof that \(P\) is relatively K-stable, even though our numerical exploration indicates that it probably is. 
It is in general very intricate to prove relative K-stability, even for pentagons, see \cite{LuoChen2020}.  
\end{rem}

\section{Present numerical and symbolic implementations}
This section describes our numerical methodology with a posteriori symbolic verification over the rational field.

\subsection{Delzant octagon families}
We recall that each family is defined explicitly by a set of eight vertices
$P=\operatorname{Conv}(v_0,\ldots,v_7).$
The coordinates are affine functions of $(l_1,l_2,l_3,l_4,l_5,l_6,k).$
The numerical implementation contains fifteen explicit constructions corresponding to distinct Delzant configurations. 

For every candidate parameter set the polygon is constructed, 
convexity is verified, Delzant primitive edge directions are assigned,
geometric quantities are computed. Only convex polygons are retained.

\subsection{Delzant boundary measure}
Let $e=[A,B]$ denote an edge of the polygon and let
$v=(v_{x_1},v_{x_2})$
be the primitive integral direction associated with that edge.
If
$(d{x_1},d{x_2})=B-A,$
the Delzant boundary weight is defined as
$\sigma
=
\left|
\frac{d{x_1}}{v_{x_1}}
\right|
$
whenever \(v_{x_1}\neq0\), and
$
\sigma
=
\left|
\frac{d{x_2}}{v_{x_2}}
\right|
$
otherwise.

Boundary integrals therefore take the form

\[
\int_{\partial P}f\,d\sigma
=
\sum_e
\sigma_e
\int_0^{l_e}f_e(t)\,dt.
\]

\subsection{The extremal affine function}
The extremal affine function is written
$A({x_1},{x_2})=\alpha_0+\alpha_1 {x_1}+\alpha_2 {x_2}.$
Its coefficients are determined by requiring that the Donaldson-Futaki invariant vanishes on all affine functions. Equivalently, \(A\) may be characterized as the affine function whose moments reproduce the boundary moments of the polygon. Introduce the affine basis
\[
\phi_0=1,
\qquad
\phi_1={x_1},
\qquad
\phi_2={x_2}.
\]
Let $\mathcal A = \operatorname{span}\{1,{x_1},{x_2}\}$
denote the three-dimensional space of affine functions on \(P\).
The extremal affine function can be viewed as the orthogonal projection of the boundary measure onto \(\mathcal A\) with respect to the interior \(L^2\)-pairing. More precisely, one seeks an affine function
\[
A=\sum_{j=0}^{2}\alpha_j\phi_j
\]
such that
\[
\int_P A\,\phi_i\,d\mu = \int_{\partial P}\phi_i\,d\sigma, \qquad i=0,1,2.
\]
This is the normal equation associated with the least-squares problem
\[
\min_{A\in\mathcal A} \sum_{i=0}^{2} \left( \int_P A\,\phi_i\,d\mu - \int_{\partial P}\phi_i\,d\sigma \right)^2 .
\]
Define the moment matrix
\[ M_{ij} = \int_P \phi_i\phi_j\,d\mu \]
and the boundary moment vector
\[ b_i =\int_{\partial P} \phi_i\,d\sigma. \]
Substituting $A$ into the moment conditions yields the linear system
\[ \sum_{j=0}^{2} M_{ij}\alpha_j = b_i, \qquad i=0,1,2,\]
or, in matrix form, $M\alpha=b.$
Since \(M\) is a Gram matrix associated with the interior \(L^2\)-pairing, it is symmetric and positive definite whenever the polygon has nonzero area. The coefficient vector is therefore uniquely determined.

\subsection{Piecewise linear test configurations}
Given two points
$p_1 \text{ and } p_2 $ chosen on two distinct polygon edges, the line
$L({x_1},{x_1})=a{x_1}+b{x_2}+c $
passing through these points is constructed.
The associated convex piecewise-linear function is
$u({x_1},{x_2}) = \max(0,L({x_1},{x_2})).$
Only non-degenerate cuts are retained, namely cuts intersecting the polygon boundary transversely in exactly two distinct points.

The positivity region is
$ P_+ = \{
({x_1},{x_2})\in P
\;:\;
L({x_1},{x_2})>0
\}.
$

\subsection{Donaldson-Futaki invariant}
For every admissible test function \(u\), the Donaldson-Futaki invariant is
$L_A(u)=\int_{\partial P}u\,d\sigma-\int_P A\,u\,d\mu.$

\subsection{Numerical integration}
The software evaluates the functional using adaptive quadrature.
Boundary contributions are computed using one-dimensional integration over each edge.
Area contributions are computed by repeated evaluation of
$ \int_P A\,u\,d\mu $
using two-dimensional adaptive quadrature.
The numerical tolerances are chosen at the level $10^{-14}.$
A fixed random seed is used to ensure reproducibility of the Monte-Carlo exploration.

\subsection{Optimization/minimization strategy}
Minimization search takes place in the admissible parameter space where for a given parameter vector $x=(l_1,l_2,l_3,l_4,l_5)$ (we fix \(l_6=1\), new candidates are generated by multiplicative perturbations
$ x_{\text{new}}   = x_{\text{old}} \exp(\eta), $
where \(\eta\) is Gaussian noise.
The algorithm accepts admissible candidates whenever
$ L_A(x_{\text{new}}) < L_A(x_{\text{best}}).$
This generates a greedy descent toward increasingly unstable configurations.
The smallest value encountered together with the associated geometry is stored.

\subsection{Examples of numerical results provided by the software}
Any value of the integer parameter \(k\) can be tested as well as a sweep over a range of \(k\).
The numerical results shown here are for exploration performed for all fifteen Delzant 
octagon families at \(k=20\). 
For each family, the minimization procedure identified the most unstable admissible configuration. 
An independent symbolic reconstruction was then carried out to (try to) confirm instability
remains under exact rational reconstruction.

\subsubsection{Consistency of the numerical pipeline}
The software also proceeds with self a posteriori validation: for every family tested, the optimal value is recomputed independently to make sure the pipeline is consistent. Typical discrepancies between the stored value and the recomputed value are between $10^{-30} \text{and } 10^{-18},$
depending on the magnitude of the functional.
This demonstrates that the numerical pipeline is internally stable and that the stored configurations can be reconstructed reliably from the recorded data and stored in {\tt json} files.

\subsubsection{Numerical ninima}
Table~\ref{tab:numeric_vs_symbolic} summarizes the best values of the Donaldson-Futaki invariant for \(k=20\).
Cases 10-15 behave differently than the first nine families.

\subsubsection{Sensitivity to floating-point precision}
The implementation also allows the user to prescribe the floating-point precision employed during the optimization stage. This makes it possible to investigate the dependence of the resulting destabilizing configurations on the arithmetic precision used by the numerical search.

For Case~15, for instance, the optimization was repeated using two different output precisions: 20 decimal digits and 30 decimal digits. The resulting configurations were then compared.
The value of Donaldson's functional remained essentially unchanged:

$
L_A^{(20)}
=
-1.4127889871597290039,
$
and
$
L_A^{(30)}
=
-1.41278898715972900390625.
$

Similarly, the affine coefficients showed agreement to many significant digits:
\begin{table}[ht]
\centering
\caption{Comparison of the affine coefficients obtained with 20 and 30 decimal digits.}
\begin{tabular}{lcc}
\hline
Coefficient & 20 digits & 30 digits \\
\hline
$\alpha_0$
&
$0.15146206420847818652$
&
$0.151462064208478186522199848696$
\\[1ex]

$\alpha_1$
&
$2.6433849878925404605\times 10^{-11}$
&
$2.64338498789254046050853085369\times 10^{-11}$
\\[1ex]

$\alpha_2$
&
$9.7393624302492321811\times 10^{-4}$
&
$9.73936243024923218106203570699\times 10^{-4}$
\\
\hline
\end{tabular}
\end{table}

The geometric data likewise remained stable. For example, the destabilizing points defining the test configuration differed only beyond the displayed precision.

These observations indicate that the numerical optimization is stable with respect to the chosen floating-point precision. In particular, the sign of Donaldson's functional remains unchanged, and the resulting destabilizing configuration exhibits no noticeable geometric variation when the precision is increased from 20 to 30 decimal digits.

\begin{rem}
There is a possibility that the algorithms used to compute \(A\) introduces significant numerical errors in close to degenerate cases, this should be investigated in further uses of the program. 
This is also addressed in the next section. 
\end{rem}

\subsection{Rational reconstruction}
At this point of the calculation, a negative value of $L_A$ is only indication of possible instability.
To make sure candidates are true instabilities, We proceed with an additional independent verification with {\tt SageMath} to determine whether the instabilities survive the replacement of all floating-point data by exact rational numbers using its decimal representation. 
In particular, the polygon parameters $l_1,\ldots,l_5,$ the line-defining points $p_1,\;p_2,$ and the affine coefficients $\alpha=(\alpha_0,\alpha_1,\alpha_2)$ were reconstructed as solution of  $M\alpha=b$ in \(\mathbf Q\).
The resulting Delzant octagon therefore lies entirely in \(\mathbf Q^2\), and the affine function
$
A({x_1},{x_2})=\alpha_0+\alpha_1 {x_1}+\alpha_2 {x_2}
$
possesses exact rational coefficients.

The destabilizing line was reconstructed from the stored points $p_1, p_2,$ using exact rational. The verification yielded $L(p_1)=0, L(p_2)=0$.
Consequently, the reconstructed line passes through the stored points without any numerical approximation.
The positivity region $ P_+ = \{({x_1},{x_2})\in P:\;L({x_1},{x_2})>0\} $
was then reconstructed using exact sign tests at the polygon vertices together with exact computations of the boundary intersection points. All vertices of the resulting positivity polygon were represented by rational coordinates.
Using the affine function reconstructed over \(\mathbf Q\), the boundary and area contributions were evaluated symbolically and also shown in Table~\ref{tab:numeric_vs_symbolic}.

\begin{table}[ht]
\centering
\begin{tabular}{cccc}
\toprule
Case &
Numerical \(L_A\) &
Symbolic \(L_A\) \\
\midrule
1  & $-1.27...\times10^{-11}$ & $1.63...\times10^{-11}$ \\
2  & $-6.13...\times10^{-13}$ & $1.83...\times10^{-13}$ \\
3  & $-2.18...\times10^{-11}$ & $2.22...\times10^{-12}$ \\
4  & $-1.39...\times10^{-12}$ & $1.38...\times10^{-13}$ \\
5  & $-2.78...\times10^{-12}$ & $3.31...\times10^{-13}$ \\
6  & $-9.51...\times10^{-12}$ & $3.18...\times10^{-13}$ \\
7  & $-1.51...\times10^{-10}$ & $2.36...\times10^{-10}$ \\
8  & $-2.11...\times10^{-11}$ & $6.78...\times10^{-12}$ \\
9  & $-3.86...\times10^{-12}$ & $1.00...\times10^{-12}$ \\
10 & $-0.071...$ & $0.01505...$ \\
11 & $-0.20...$ & $0.01920...$ \\
12 & $-0.19...$ &  $0.05482...$ \\
13 & $-320.28...$ & $1.13443...$ \\
14 & $-12.77...$ & $0.41400...$ \\
15 & $-1.41...$ & $0.00448...$ \\
\bottomrule
\end{tabular}
\caption{Comparison between numerical and symbolic minimum $L_A$.}
\label{tab:numeric_vs_symbolic}
\end{table}

The destabilizing configurations found numerically in real field do not resist under exact rational reconstruction and symbolic verification. 

\section{Exact Rational Monte Carlo Search}
To eliminate floating-point effects entirely, a second search was carried out directly over the field \(\mathbf Q\). Rational values of the parameters $(l_1,l_2,l_3,l_4,l_5)$
were generated by a Monte Carlo procedure subject to the admissibility constraints of the corresponding Delzant octagon. Two rational points were then chosen on distinct edges of the polygon, defining a rational piecewise-linear test configuration.
For each sample, every quantity was computed exactly in \(\mathbf Q\):
\begin{enumerate}
\item construction of the Delzant polygon,
\item computation of the moment matrix \(M\),
\item computation of the boundary moment vector \(b\),
\item solution of the extremal affine equation
\[
M\alpha=b,
\]
\item construction of the positive region \(P_+\),
\item exact evaluation of the area term,
\item exact evaluation of the boundary term,
\item exact computation of the Donaldson functional \(L_A\).
\end{enumerate}
Thus no numerical quadrature, floating-point arithmetic, or rational reconstruction was involved.

Among the fifteen families considered, Case 15 appeared to be the most likely location for  instability. However, an exact search over rational parameter values failed to produce a negative value of Donaldson's functional. The smallest value found was approximately $4.92...\times10^{-5}$, below the value  $4.48...\times10^{-3}$ obtained by the floating-point search, yet remaining strictly positive.

\section{Physical interpretation, connection with Green's formula and Poisson theory}
The Donaldson functional resembles the structure of Green's identities. Indeed, if \(u\) and \(v\) are sufficiently regular functions on a planar domain \(P\), Green's formula gives
\[
\int_P \nabla u\cdot\nabla v\,dx
=
-\int_P u\,\Delta v\,dx
+
\int_{\partial P}
u\,\frac{\partial v}{\partial n}\,d\sigma,
\]
where \(\partial v/\partial n\) denotes the outward normal derivative.
The Donaldson functional compares a boundary contribution with a bulk contribution, 
as in Green's identity. The affine function $A(x)$
plays the role of an interior source term. The coefficients
\((\alpha_0,\alpha_1,\alpha_2)\) are chosen so that the functional vanishes on all affine functions,
\[
L_A(1)=L_A(x_1)=L_A(x_2)=0.
\]
Let
\[
\phi=(1,x_1,x_2).
\]
The normalization may be written as
\[
\int_P A(x)\,\phi_i(x)\,dx
=
\int_{\partial P}\phi_i\,d\sigma,
\qquad i=0,1,2.
\]
Expanding
\[
A(x)=\sum_{j=0}^{2}\alpha_j\phi_j(x),
\]
one obtains the linear system $M\alpha=b,$
where
$
M_{ij}
=
\int_P \phi_i\phi_j\,dx,
$
and
$
b_i
=
\int_{\partial P}\phi_i\,d\sigma.
$

$A$ is therefore a moment-matching density. It provides the unique affine interior density whose zeroth moment (or total mass: $ \int_{\Delta} A\, d\mu = \int_{\partial\Delta} 1\, d\sigma,$ and first moment
(or barycentric measure: $\int_{\Delta} xA\, d\mu = \int_{\partial\Delta} x\, d\sigma$)
moments agree with those of the boundary measure. 
In other words, $A$ is the unique affine distribution reproducing the monopole and dipole structure of the boundary measure. Indeed, the construction admits a natural interpretation in terms of the Poisson equation
$-\Delta\Phi=\rho,$
where \(\rho\) represents a bulk source and \(\Phi\) the associated potential.
If \(\Phi\) satisfies
$-\Delta\Phi=A$
inside the polygon and
$
\frac{\partial\Phi}{\partial n}=1
$
on the boundary, Green's identity yields
$
L_A(u)
=
-\int_P \nabla\Phi\cdot\nabla u\,dx.
$

From this viewpoint, the affine function \(A\) plays the role of a bulk charge density;
 the boundary measure \(d\sigma\) acts as a uniform boundary source;
  \(\Phi\) is the potential generated by these sources;
   \(u\) is a test potential.

The normalization conditions
$L_A(1)=L_A(x_1)=L_A(x_2)=0$
state that the bulk source \(A\) reproduces the total mass and first moments of the boundary distribution. In physical language, the monopole and dipole moments of the interior and boundary contributions coincide.

If a destabilizing configuration is found in the toric setting 
for the test function $u(x)=\max\{0,L(x)\},$
where \(L(x)=0\) is a line intersecting the polygon, it can be interpreted
physically as a piecewise-linear potential or a deformation mode 
of a membrane occupying the polygon \(P\).
$ L_A(u)$ measures the balance between boundary effects and bulk effects for this particular mode. If $L_A(u)<0$ the bulk contribution weighted by \(A\) dominates the boundary contribution. 

Another interesting and intuitive physical interpretation is that the absence of negative values of $L_A(u)$ among the tested functions indicates that no energy-decreasing mode was detected within the sampled family of piecewise-linear test configurations.
Actually, the configurations appear stable under the discretized perturbations represented by the rational search, while continuous search reveals genuine energy-lowering modes.

\section{Conclusions}

We have developed a combined numerical and symbolic framework for studying relative K-stability of families of Delzant octagons. The implementation performs large-scale searches for destabilizing piecewise-linear test configurations, reconstructs the resulting geometry symbolically, and verifies the numerical computations independently.
The framework identifies explicit "candidate" configurations for which Donaldson's functional becomes negative and provides a symbolic reconstruction of the corresponding floating-point data together with recalculation of the extremal function $A$ in rational field.
None of the "candidate" unstable configurations found by numerical minimization has survived this rational reconstruction and symbolic verification. 
As a consequence the family of octagons exhibited in Wang and Zhou's article may very well actually be relatively K-stable, and their initial argument is corrected here. 

Independent minimization carried out entirely over the rational field likewise failed to produce any destabilizing configuration.
Hence, minimization over real values does not appear suitable for the discovery of unstable rational configurations in this precise setting. 

The computational framework developed for this article may be adapted with limited efforts to different settings were it should instead allow the user to find new relatively K-unstable examples, or to get numerical evidence of relative K-stability. 
The authors used a preliminary version of the code to numerically verify Luo and Chen's result in \cite{LuoChen2020}. 
For example, the program could be used as a tool to establish a numerical cartography of relatively K-unstable nonagons, whose existence was established by Wang and Zhou in \cite{Wang-Zhou_2011} and for which an explicit example is provided in Sektnan's PhD thesis \cite{Sektnan_2016_PhD}.  
If one is willing to consider non smooth toric surfaces, one may quite easily find relatively K-unstable quadrilaterals among orbifold toric surfaces \cite{Legendre_2011,Apostolov_Calderbank_Gauduchon_2015_ambitoric2}, but there is to our knowledge no completely explicit example given of relatively K-unstable normal toric surface in the literature. 
Numerical tools may also help in the cartography of relatively K-unstable normal toric surfaces. 

\section{Use of Artificial Intelligence Tools}

Microsoft Copilot was used as an auxiliary redaction and programming tool during the implementation phase of this work. Software development tasks concerned the generation of {\tt Python} and {\tt SageMath} codes from formulas in {\tt Latex}, construction of vertex data structures for Delzant polygons, and automated verification routines for strict convexity and edge-direction consistency. OpenAI's ChatGPT 5.6 and Microsoft Copilot were used to investigate independently the existence of relative K-unstable octagons, with no success. The mathematical arguments and interpretation of the computational results were performed and validated by the authors. The codes were reviewed, corrected and tested by the authors before being used in the computations reported in this article. 
 
\section{Software Availability and Reproducibility}
 
The computational framework developed in this work is publicly available at:

\begin{center}
\url{https://github.com/bijanmohammadi/K_Stability_Donaldson_Delzant.git}
\end{center}

The repository contains:

\begin{itemize}
\item the numerical search script for Delzant octagon families in {\tt Python};
\item the symbolic verification script in {\tt  SageMath} reading {\tt json} files generated by the 
 {\tt Python} code;
\item {\tt json} files of the 15 solutions presented in the paper which can be checked with the {\tt  SageMath} script;
\item the symbolic search Monte Carlo script for Delzant octagons in {\tt SageMath}.
\end{itemize}

\bibliographystyle{alpha}
\bibliography{NCRKO}

\end{document}